\newcommand{\CC}{\ensuremath{\mathbb{C}}}      
\newcommand{\RR}{\ensuremath{\mathbb{R}}}
\newcommand{\QQ}{\ensuremath{\mathbb{Q}}}   
\newcommand{\ZZ}{\ensuremath{\mathbb{Z}}}
\newcommand{\eps}{\varepsilon}
\DeclareMathOperator{\re}{Re}
\newtheorem{theorem}{Theorem}
\newtheorem*{theorem*}{Theorem}
\newtheorem*{proposition*}{Proposition}
\newtheorem*{lemma*}{Lemma}
\newtheorem*{corollary*}{Corollary}
\theoremstyle{definition}
\newtheorem*{remark*}{Remark}
\newtheorem*{remarks*}{Remarks}
\newtheorem*{acknowledgements}{Acknowledgements}
\title{Zeros of $L(s)+L(2s)+\cdots+L(Ns)$ in the region of absolute convergence}
\author{{\L}ukasz Pa\'nkowski and Mattia Righetti}
\date{}
\address{Faculty of Mathematics and Computer Science, Adam Mickiewicz University, 61-614 Pozna\'n, Poland}
\email{lpan@amu.edu.pl}
\address{Dipartimento di Matematica, Università di Genova, Via Dodecaneso 35, 16146 Genova, Italy.}
\email{righetti@dima.unige.it}           
\begin{document}

\maketitle

\begin{abstract}
In this paper we show that for every Dirichlet $L$-function $L(s,\chi)$ and every $N\geq 2$ the Dirichlet series $L(s,\chi)+L(2s,\chi)+\cdots+L(Ns,\chi)$ have infinitely many zeros for $\sigma>1$. Moreover we show that for many general $L$-functions with an Euler product the same holds if $N$ is sufficiently large, or if $N=2$. On the other hand we show with an example the the method doesn't work in general for $N=3$.\\
%
\end{abstract}


\section{Introduction}

It is well-known that Dirichlet series without an Euler product might not satisfy an analog of the Riemann Hypothesis, and actually they often have zeros also in the region of absolute convergence where the Euler product has surely none. One simple example is the so-called Davenport-Heilbronn zeta function (see e.g. \cite[\S10.25]{titchmarsh}), which is defined by
\[
	f(s)=\frac{1-i\varkappa}{2}L(s,\chi_1)+\frac{1+i\varkappa}{2}L(s,\overline{\chi}_1),\qquad\varkappa=\frac{\sqrt{10-2\sqrt{5}}-2}{\sqrt{5}-1},
\]
where $\chi_1$ denotes a Dirichlet character modulo $5$ such that $\chi_1(2)=i$ and $L(s,\chi)$ denotes the Dirichlet $L$-function attached to $\chi$. One can easily observe that it is a Dirichlet series satisfying the Riemann-type functional equation, but it has no Euler product and infinitely many zeros for $\sigma>1$, so any analog of the Riemann Hypothesis cannot hold. Moreover in \cite{davenport1,davenport2} Davenport and Heilbronn provided us with more examples of zeta-functions with infinitely many zeros in the region of absolute convergence. For instance, they proved that the Hurwitz zeta-functions $\zeta(s,\alpha)$, $\alpha\not\in\frac{1}{2}\ZZ $, has infinitely many zeros in the half-plane $\sigma>1$, unless $\alpha$ is algebraic irrational, whereas the same property in this case was proved by Cassels in \cite{cassels}. It is noteworthy to mention that the fact that $\zeta(s,\alpha)$ with rational $\alpha$ can be written as a combination of Dirichlet $L$-functions plays a crucial role in the reasoning of Davenport and Heilbronn. Recently there have been some generalizations for linear combinations of $L$-functions starting with Saias and Weingartner \cite{saias}, and then Booker and Thorne \cite{booker} and Righetti \cite{righetti}. Note that in all aforementioned papers it was crucial to have a combination of $L$-functions with an abscissa of absolute convergence equal to $1$. Combinations of two $L$-functions, where at least one term has an abscissa of absolute convergence 1 were partially investigated by Nakamura and Pa\'nkowski in \cite{nakapank}. Let us notice that the case of a combination of only two terms in much easier as it is sufficient to consider the value-distribution of a quotient of two $L$-functions, which has good property like an Euler product. Thus, in order to treat combinations of $L$-functions with three and more terms some new idea is needed. 

In this paper we consider the following related problem: given an $L$-function $L(s)$ with an Euler product, do the Dirichlet series $L(s)+L(2s)+\cdots+L(Ns)$ have infinitely many zeros in the region of absolute convergence?

The following result shows that the answer is affirmative for most of the known $L$-functions for every sufficiently large $N$, or if $N=2$, and in some cases we are able to show the existence of zeros for every $N\geq 2$.

\begin{theorem}\label{theorem:L1n}
Let $L(s)$ be a Dirichlet series with the following properties:
\begin{enumerate}[(I)]
    \item\label{hp:abs_conv} $L(s)=\displaystyle\sum_{n=1}^\infty \frac{a(n)}{n^s}$ is absolutely convergent for $\sigma>1$;
    \item\label{hp:EP} $\log L(s) = \displaystyle\sum_p\sum_{k=1}^\infty \frac{b(p^k)}{p^{ks}}$ is absolutely convergent for $\sigma>1$ and there exist $K>0$ and $\theta<1/2$ such that $|b(p^k)|\leq K p^{k\theta}$ for every prime $p$ and every integer $k\geq 1$; 
    \item\label{hp:PNT} $\sum_{p} |a(p)|p^{-\sigma}\rightarrow\infty$ when $\sigma\rightarrow 1^+$.
\end{enumerate}
Then $L(s)+L(2s)+\cdots+L(Ns)$ has infinitely many zeros for $\sigma>1$ if $N$ is sufficiently large or if $N=2$.\\
If furthermore $|a(n)|\leq n^{1/4}$ for every $n\geq 2$, then $L(s)+L(2s)+\cdots+L(Ns)$ has infinitely many zeros for $\sigma>1$ for every $N\geq 2$.
\end{theorem}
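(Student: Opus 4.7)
Write $F_N(s) = L(s) + g(s)$ with $g(s) := L(2s) + \cdots + L(Ns)$. Since $L$ is non-vanishing on $\sigma>1$ by the Euler product in (II), the zeros of $F_N$ there are exactly the solutions of $L(s) = -g(s)$. The strategy is: (i) show that $g$ is a small, slowly varying perturbation for $\sigma>1$; (ii) use (II) and (III) with Kronecker--Weyl equidistribution to show $L(\sigma+it)$ sweeps out a large region of $\CC$ for $\sigma$ just above $1$; (iii) feed this into the argument principle on a rectangle to produce one zero, then promote to infinitely many via almost periodicity of Dirichlet series in their region of absolute convergence.

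\textbf{Step 1: control of $g$.} By (I), $g$ is holomorphic on $\sigma>1/2$ and bounded on $\sigma\geq 1+\delta$. The extra hypothesis $|a(n)|\leq n^{1/4}$ yields the sharp uniform estimate
\[
|L(ks)-1|\leq\sum_{n\geq 2}n^{1/4-k\sigma}\leq\zeta(7/4)-1<0.2\qquad (k\geq 2,\, \sigma>1),
\]
so $|g(s)-(N-1)|<0.2(N-1)$ throughout $\sigma>1$. In particular $g$ is bounded away from $0$ and the target curve $t\mapsto -g(\sigma_0+it)$ is trapped in a small disk around $-(N-1)$, independently of the choice of $\sigma_0>1$.

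\textbf{Step 2: Bohr--Kronecker density for $L$.} Truncate the Euler product: $\log L(\sigma+it)=\sum_{p\leq X}b(p)p^{-\sigma-it}+R_X(\sigma+it)$, with $R_X$ uniformly small thanks to $|b(p^k)|\leq Kp^{k\theta}$ with $\theta<1/2$. Kronecker--Weyl gives equidistribution of $(p^{-it})_{p\leq X}$ on $\TT^{\pi(X)}$, so $\log L(\sigma+it)$ is asymptotically distributed like $\sum_{p\leq X}b(p)p^{-\sigma}\omega_p$ for i.i.d.\ uniform $\omega_p\in\TT$. Since $\sum_{p\leq X}|b(p)|/p^\sigma\to\infty$ as $\sigma\to 1^+$ and $X\to\infty$ by (III), one can pick $\sigma_0>1$ close to $1$ so that the Bohr distribution of $L(\sigma_0+it)$ has support containing any prescribed large disk in $\CC$, and in particular properly surrounds the bounded target $\{-g(\sigma_0+it):t\in\RR\}$.

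\textbf{Step 3: argument principle and almost periodicity.} From Step 2 one extracts a vertical segment $\sigma_0+i[T,T+H]$ on which the curve $t\mapsto F_N(\sigma_0+it)=L(\sigma_0+it)+g(\sigma_0+it)$ loops around the origin: at certain $t$'s, $L$ has huge positive real part and $F_N$ lies far to the right of $0$; at others, $L$ is close to $-g$ and $F_N$ is close to $0$; at others still, $L$ has large negative real part and $F_N$ lies far to the left. Closing with a rectangle $R=[\sigma_0,\Sigma]\times[T,T+H]$ for $\Sigma$ large, one checks that the change of $\arg F_N$ along the right side (where $F_N\to N$) is $O_N(1)$, along the horizontal sides it is $O(\log\Sigma)$, and along the left side the loop forces a change of at least $2\pi$. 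The argument principle then produces at least one zero of $F_N$ in $R$. Finally, $F_N$ is almost periodic on $\sigma=\sigma_0$ (each $L(ks)$ being a Dirichlet series in its region of absolute convergence), so Rouché's theorem applied to translates $R+i\tau$ for arbitrarily large almost periods $\tau$ produces infinitely many zeros.

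\textbf{Main obstacle.} The crux is the combination of Steps 2 and 3: producing a segment on which $L(\sigma_0+it)$ \emph{provably} loops around the target set $\{-g(\sigma_0+it)\}$, not merely comes close to it. This requires a quantitative Bohr--Jessen type limit theorem for $\log L$, using (II) with $\theta<1/2$ (to control the higher prime-power contributions) and (III) (to ensure the limiting distribution is spread widely enough to contain $-g$'s bounded range). The hypothesis $|a(n)|\leq n^{1/4}$ in the final statement is exactly what keeps $g$ close to the nonzero constant $N-1$, so the target remains within reach of the Bohr distribution of $L$ for some $\sigma_0>1$ regardless of $N$; without it, the method works only in the extreme regimes $N=2$ (where $g\approx 1$ is a fixed target) or $N$ large (where $|g|\gtrsim N$ is easily matched by $|L|$), but may fail for intermediate $N$ precisely because $g$ could come close to vanishing.
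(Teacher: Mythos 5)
Your Step 1 and your endgame (almost periodicity plus Rouch\'e to pass from one zero to infinitely many) are consistent with the paper, but the core of your argument --- the combination of Steps 2 and 3 --- is exactly the part you yourself flag as the ``main obstacle'', and it is left unresolved; this is a genuine gap, not a technicality. The difficulty is structural: $L(\sigma_0+it)$ and the target $-g(\sigma_0+it)$ are driven by the \emph{same} phases $(p^{-it})_p$, so equidistribution of $\log L(\sigma_0+it)$ over a large region does not let you steer $L$ onto a target that moves with $t$; likewise, knowing that $F_N(\sigma_0+it)$ is sometimes far to the right of the origin, sometimes near it, and sometimes far to the left does not force the boundary curve of your rectangle to have nonzero winding number about $0$, so the argument principle as you invoke it gives nothing. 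The paper closes precisely this gap with its Theorem \ref{theorem:sol_equation}, following Saias--Weingartner: using (III) one splits the primes into three blocks each carrying roughly a third of the mass $\sum_p|a(p)|p^{-\sigma}$ and constructs, for every $z$ in a disk of radius $B+K_\theta+\pi$, a \emph{continuous} choice of phases $t_p(z)$ with $\sum_p a(p)p^{-\sigma-it_p(z)}=z$; Brouwer's fixed point theorem applied to the map sending $z$ to the value of $\log(-g)+\pi i$ minus the higher-prime-power tail, all evaluated at the phases $t_p(z)$, then absorbs simultaneously the moving target and the contribution of the $b(p^k)$, $k\geq 2$; finally Bohr's equivalence theorem converts the independent phases back into an actual $s$ with $\sigma>1$. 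Without this fixed-point step, or some substitute for it (e.g.\ a proof that $L(\sigma_0+it)$ traces a loop enclosing the entire disk in which $-g$ is trapped), your proof does not go through.

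Two smaller points. First, for $N=2$ your justification ``$g\approx 1$ is a fixed target'' is not available under (I)--(III) alone: what makes $N=2$ work is that $g(s)=L(2s)$ inherits the Euler product, so $|\log(-L(2s))|\leq \pi+K\sum_p\sum_{k\geq1}p^{k(\theta-2)}$ is uniformly bounded for $\sigma\geq 1$ directly from (II), with no smallness of $L(2s)-1$ needed; for $3\leq N$ the sum $g$ has no Euler product and one genuinely needs it trapped in a disk about $N-1$ excluding the origin, which is the content of the paper's Lemma. Second, $\zeta(7/4)-1\approx 0.97$, not $<0.2$; the bound $|L(ks)-1|<\zeta(k-1/4)-1$ still sums to less than $N-1$ for every $N\geq 2$, so your qualitative conclusion in Step 1 survives, but the stated constant is wrong and should be checked before being used quantitatively elsewhere.
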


As we already mentioned the hypotheses on $L(s)$ are very mild and are satisfied by most of the known $L$-functions. In particular they are satisfied by Hecke and Artin $L$-functions by definition and Chebotarev's density theorem (see e.g. Neukirch \cite{neukirch}), by $L$-series attached to unitary cuspidal automorphic representations of $GL_r(\mathbb{A}_\QQ)$ (see Rudnick and Sarnak \cite{rudnick}), and conjecturally by elements of the Selberg class (see Selberg \cite{selberg}, and Kaczorowski and Perelli \cite{kaczper8} for remarks on \ref{hp:PNT}). In particular for Dirichlet $L$-functions $L(s,\chi)$, and the Riemann zeta function $\zeta(s)$, we know that $|a(n)|\leq 1$ for every $n\geq 1$, so we immediately get

\begin{corollary*}
For every $N\geq 2$ and every Dirichlet character $\chi$ mod ${q}$, $q\geq 1$, the Dirichlet series $L(s,\chi)+L(2s,\chi)+\cdots+L(Ns,\chi)$ has infinitely many zeros for $\sigma>1$.
\end{corollary*}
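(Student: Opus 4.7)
The plan is to deduce the corollary as a direct application of Theorem~\ref{theorem:L1n}: I will verify that for every Dirichlet character $\chi$ modulo some $q\ge 1$, the $L$-function $L(s,\chi)$ satisfies all three hypotheses (\ref{hp:abs_conv}), (\ref{hp:EP}), (\ref{hp:PNT}) \emph{together with} the additional coefficient bound $|a(n)|\le n^{1/4}$ ($n\ge 2$) that appears in the final assertion of the theorem; since that assertion gives infinitely many zeros for every $N\ge 2$, the corollary follows immediately.

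First, since $|\chi(n)|\le 1$ for every $n\ge 1$, the Dirichlet series $L(s,\chi)=\sum_{n\ge 1}\chi(n)n^{-s}$ converges absolutely for $\sigma>1$, which verifies (\ref{hp:abs_conv}). For (\ref{hp:EP}), the complete multiplicativity of $\chi$ gives the Euler product $L(s,\chi)=\prod_p(1-\chi(p)p^{-s})^{-1}$ for $\sigma>1$, and hence the logarithmic expansion
\[
\log L(s,\chi)=\sum_p\sum_{k=1}^\infty\frac{\chi(p)^k}{k\,p^{ks}},
\]
so that $b(p^k)=\chi(p)^k/k$ satisfies $|b(p^k)|\le 1$; thus (\ref{hp:EP}) holds, for instance with $K=1$ and any $\theta\in(0,1/2)$ (in fact with $\theta=0$), and absolute convergence for $\sigma>1$ is immediate.

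For (\ref{hp:PNT}), I would use that $|\chi(p)|=1$ for every prime $p\nmid q$, so
\[
\sum_p|\chi(p)|p^{-\sigma}=\sum_p p^{-\sigma}-\sum_{p\mid q}p^{-\sigma},
\]
in which the first sum diverges as $\sigma\to 1^+$ (for example via $\sum_p p^{-\sigma}\sim\log\frac{1}{\sigma-1}$) while the second is a finite, hence bounded, sum. Finally, $|a(n)|=|\chi(n)|\le 1\le n^{1/4}$ for every $n\ge 2$, which gives the extra coefficient bound required by the second assertion of Theorem~\ref{theorem:L1n}.

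Having verified all four conditions, the corollary follows at once from the strong form of the theorem: $L(s,\chi)+L(2s,\chi)+\cdots+L(Ns,\chi)$ has infinitely many zeros in the half-plane $\sigma>1$ for every $N\ge 2$ and every $q\ge 1$. The strategy is purely bookkeeping: no step here presents a real obstacle, since each verification is a standard fact about Dirichlet characters, and the whole substance of the argument is absorbed into the proof of Theorem~\ref{theorem:L1n}.
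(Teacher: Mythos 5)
Your proposal is correct and follows exactly the paper's route: the corollary is obtained by applying the strong form of Theorem~\ref{theorem:L1n}, using $|a(n)|=|\chi(n)|\le 1\le n^{1/4}$ (the paper states only this last observation and leaves the routine verification of (\ref{hp:abs_conv})--(\ref{hp:PNT}) implicit, which you carry out correctly).
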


Theorem \ref{theorem:L1n} is actually an application of the following general result.

\begin{theorem}\label{theorem:sol_equation}
Let $f(s)=\sum_n c(n)n^{-s}$ be an absolutely convergent Dirichlet series for $\sigma\geq 1$ such that its logarithm is uniformly bounded for $\sigma\geq 1$, and let $L(s)$ be as in Theorem \ref{theorem:L1n}. Then $L(s)+f(s)$ has infinitely many zeros in the half-plane $\sigma>1$.
\end{theorem}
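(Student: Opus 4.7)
The plan is to reformulate the zero-finding problem for $h(s):=L(s)+f(s)$ as a logarithmic equation, use Kronecker's equidistribution theorem to produce approximate solutions, and then upgrade these to genuine zeros by Rouché's theorem.

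Because (II) gives an Euler product, $L(s)\neq 0$ for $\sigma>1$; because $\log f$ is uniformly bounded on $\sigma\geq 1$, $f$ is nonvanishing and $|f|$ lies in a fixed annulus. Therefore, for $\sigma>1$, $h(s)=0$ is equivalent to $\log L(s)-\log(-f(s))=2\pi in$ for some $n\in\ZZ$, on a suitable branch. Splitting $\log L(s)=P(s)+E(s)$ with $P(s):=\sum_p b(p)p^{-s}$ and $E(s):=\sum_p\sum_{k\geq 2}b(p^k)p^{-ks}$, the bound $|b(p^k)|\leq Kp^{k\theta}$ with $\theta<1/2$ forces the series $E$ to converge absolutely and uniformly on $\sigma\geq 1$, so $E$ is bounded there; setting $G(s):=\log(-f(s))-E(s)$, the problem becomes finding infinitely many $(s,n)$ with $\sigma>1$ solving $P(s)=G(s)+2\pi in$.

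For a large target $n\in\NN$, I would use (III) together with the identity $b(p)=a(p)$ (from matching coefficients in $L=\exp(\log L)$) to pick $\sigma_0>1$ close to $1$ and $M$ large so that $S_M:=\sum_{p\leq M}|b(p)|p^{-\sigma_0}>4\pi n+\|G\|_\infty+10$. Kronecker's theorem applied to the $\QQ$-linearly independent set $\{\log p:p\leq M\}$ then produces, for any $\varepsilon>0$ and any prescribed targets $(\omega_p)\in\TT^M$, a set of arbitrarily large $t$ with $|e^{-it\log p}-\omega_p|<\varepsilon$ for all $p\leq M$, of positive lower density. Choosing $\omega_p:=i|b(p)|/b(p)$ yields
\[
\sum_{p\leq M}\frac{b(p)}{p^{\sigma_0+it}}\approx iS_M,
\]
which by a small adjustment of $\sigma_0$ or of the target phases can be tuned to equal $G(\sigma_0+it)+2\pi in$ up to arbitrarily small error. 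A mean-square estimate (using $\theta<1/2$ so that $\sum_p|b(p)|^2 p^{-2\sigma_0}<\infty$) shows that the tail $\sum_{p>M}b(p)p^{-\sigma_0-it}$ is small outside a set of $t$ of small density; intersecting with the Kronecker set gives $s_n:=\sigma_0+it_n$ with $t_n\to\infty$ and $|h(s_n)|<\delta_n$ for any prescribed $\delta_n$.

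To convert $s_n$ into a genuine zero I would apply Rouché on $D:=D(s_n,r)$ with $r<\sigma_0-1$, comparing $h$ to $h-h(s_n)$: the latter vanishes at $s_n$, and on $\partial D$ one has $|h(s)-h(s_n)|\geq |h'(s_n)|r/2$ for small $r$, so a zero of $h$ in $D$ is forced as soon as $|h(s_n)|<|h'(s_n)|r/2$. The key observation is that the very phase alignment that made $P(s_n)$ large also makes $P'(s_n)$ large: since $\log p>0$ acts as a positive weight,
\[
P'(s_n)\approx -i\sum_{p\leq M}\frac{|b(p)|\log p}{p^{\sigma_0}},
\]
of magnitude at least $\log 2 \cdot S_M\gg n$. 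Since $|L(s_n)|\asymp|f(s_n)|$ is bounded, $|L'(s_n)|=|L(s_n)|\cdot|(\log L)'(s_n)|$ is correspondingly large, and $|h'(s_n)|$ dominates $|f'(s_n)|$; choosing $r$ appropriately (well inside $\sigma>1$) then gives Rouché. Letting $n$ vary produces $t_n\to\infty$ and hence infinitely many zeros.

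The main obstacle I foresee is the derivative lower bound on $|h'(s_n)|$. A priori, $|f'(\sigma_0+it)|$ could grow as $\sigma_0\to 1^+$ (Cauchy's formula applied to $\log f$ in a disk of radius $\sigma_0-1$ only gives $|(\log f)'|\lesssim 1/(\sigma_0-1)$, comparable to the divergence rate one gets from (III) in the worst cases), so a careful argument is needed to rule out cancellation between $L'$ and $f'$; the cleanest fix is probably to exploit the freedom in the Kronecker targets $(\omega_p)$ to slightly rotate the direction of $L'(s_n)$ away from $-f'(s_n)$, which only forbids a measure-zero set of choices and so is compatible with the positive-density Kronecker conclusion. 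The remaining technicality is the quantitative balance of the parameters $M,\varepsilon,\sigma_0-1,\delta_n,\eta_n$ as $n\to\infty$, which requires a quantitative (Weyl-type) form of Kronecker's theorem but is otherwise routine.
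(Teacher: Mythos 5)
Your overall strategy (pass to a logarithmic equation, isolate the prime sum $P(s)=\sum_p a(p)p^{-s}$, align phases via Kronecker, then convert approximate zeros into actual ones) is in the right spirit, but two steps are genuinely incomplete, and they are exactly the points where the paper's proof does something different. The first is the ``moving target'' problem. You choose Kronecker targets $\omega_p$ so that $P(\sigma_0+it)\approx iS_M$ and then say this ``can be tuned'' to equal $G(\sigma_0+it)+2\pi i n$. But $G=\log(-f)-E$ is itself an almost periodic function of $t$: as you move $t$ inside the Kronecker set to adjust $P$, the right-hand side moves with you, and knowing only that it stays in a disc of radius $B+K_\theta+\pi$ does not let you hit it. The paper resolves precisely this coupling with a topological argument: following Saias and Weingartner, one builds \emph{continuous} phase functions $t_p(z)$ solving $\sum_p a(p)p^{-\sigma-it_p(z)}=z$ for \emph{every} $z$ in the disc $|z|\le B+K_\theta+\pi$ (the three-block construction with $\mu_0,\mu_1,\mu_2$ and the map $(\theta_1,\theta_2)\mapsto\mu_1e^{i\theta_1}+\mu_2e^{-i\theta_2}$), and then applies Brouwer's fixed point theorem to the self-map of that disc sending $z$ to the value of $\log(-f)-E$ at the phases $t_p(z)$. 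The fixed point gives an \emph{exact} solution of the equation in the Bohr model with independent phases. Without some such fixed-point or degree argument, your construction only shows that $P$ and $G+2\pi i n$ each lie in certain regions, not that they ever coincide, even approximately.

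The second gap is the passage from approximate to genuine zeros. Your Rouch\'e step needs a lower bound on $|h'(s_n)|$, and, as you note, Cauchy's estimate only gives $|f'(\sigma_0+it)|=O(1/(\sigma_0-1))$, while your lower bound for $|L'(s_n)|$ is of size $S_M=\sum_{p\le M}|a(p)|p^{-\sigma_0}$, which hypothesis (III) only guarantees tends to infinity --- possibly far more slowly than $1/(\sigma_0-1)$ (for $\zeta$ it is of size $\log\frac{1}{\sigma_0-1}$). Cancellation between $L'$ and $f'$ cannot be excluded by ``rotating the Kronecker targets'': the rotation that moves $L'(s_n)$ also moves $f'(s_n)$, since both are evaluated at the same point, and you have no control on the direction of $f'$. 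The paper sidesteps derivatives entirely: because the Brouwer step produces an exact zero of the twisted series, the strong form of Bohr's equivalence theorem (normal families plus Hurwitz's theorem, which needs no quantitative lower bounds) yields infinitely many actual zeros of $L(s)+f(s)$ in $\sigma>1$. To keep your quantitative route you would have to supply the missing derivative estimate, which does not follow from hypotheses (I)--(III).
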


To prove Theorem \ref{theorem:sol_equation} we start from the trivial fact that a zero for $L(s)+f(s)$ is also a solution of the equation
\[
L(s)=-f(s).
\]
The hypothesis of Theorem \ref{theorem:sol_equation} are needed to allow us to take the principal branch of the logarithm of both sides, and thus we are left with the problem of finding solutions $s=\sigma+it\in\CC$ with $\sigma>1$ of the equation
\begin{equation}\label{equation:equation}
    \log L(s)=\log f(s)+\pi i,
\end{equation}
and for this we shall apply Saias and Weingartner's argument \cite{saias}.

Note that for a $L$-function $L(s)$ satisfying the hypotheses of Theorem \ref{theorem:L1n} we cannot always take the logarithm of $f(s)=L(2s)+\cdots+L(Ns)$ if $N$ is fixed since it might vanish, as the following example for $N=3$ and $L(s)=\zeta^k(s)$ shows; this is probably just a limit of the method.

\begin{theorem}\label{theorem:zetak23}
For every integer $k\geq 9$, $\zeta^k(2s)+\zeta^k(3s)$ has infinitely many zeros for $\sigma>1$. On the other hand if $1\leq k\leq 5$, $\zeta^k(2s)+\zeta^k(3s)$ has no zeros for $\sigma>1$.
\end{theorem}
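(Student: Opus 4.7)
I would begin by writing $F(s):=\zeta^k(2s)+\zeta^k(3s)=\zeta^k(3s)\bigl(G(s)^k+1\bigr)$ with $G(s):=\zeta(2s)/\zeta(3s)$. Since $\zeta(3s)\ne 0$ on $\sigma>1/3$, the zeros of $F$ in $\sigma>1$ are exactly the solutions of $G(s)^k=-1$, that is, $G(s)=e^{i\pi(2j+1)/k}$ for some $j\in\ZZ$. On $\sigma>1$ the Euler product $G(s)=\prod_p H_p(s)$ with $H_p(s):=(1-p^{-3s})/(1-p^{-2s})$ gives a well-defined principal branch $\log G(s)=\sum_p \log H_p(s)$, reducing the problem to deciding whether $\log G(s)=i\pi(2j+1)/k$ has solutions.

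For the non-existence part ($1\le k\le 5$), my plan is to prove the uniform bound $|\arg G(s)|<\pi/5$ on $\sigma>1$: since $\min_j|\pi(2j+1)/k|=\pi/k\ge\pi/5$, this rules out any solution. By Kronecker's theorem on the $\QQ$-linear independence of $\{\log p\}_p$,
\[
\sup_{t\in\RR}\bigl|\arg G(\sigma+it)\bigr|\;=\;\sum_p\sup_{\phi\in[0,2\pi)}\bigl|\arg H_p(\sigma,\phi)\bigr|,
\]
with $H_p(\sigma,\phi):=(1-p^{-3\sigma}e^{-3i\phi})/(1-p^{-2\sigma}e^{-2i\phi})$. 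The crude triangle bound $|\arg H_p|\le\arcsin(p^{-2\sigma})+\arcsin(p^{-3\sigma})$ already sums to only $\approx 0.631$ at $\sigma=1$, a hair above $\pi/5\approx 0.628$; the critical refinement will be that for each prime $p$ the extrema of $|\arg(1-p^{-2s})|$ and $|\arg(1-p^{-3s})|$ are attained at \emph{different} values of $\phi$ (the periods $2\pi/2$ and $2\pi/3$ are incommensurable), so $\sup_\phi|\arg H_p|$ is strictly smaller than that crude sum. Handling $p=2,3$ by careful (essentially numerical) optimisation and using the crude bound for the tail should push the total below $\pi/5$ uniformly for $\sigma\ge 1$. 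This is the main obstacle of the theorem, as the margin is razor thin.

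For the existence part ($k\ge 9$), my plan is to adapt the Saias--Weingartner reduction behind Theorem~\ref{theorem:sol_equation} and solve $G(s)=e^{i\pi/k}$ directly. By Kronecker's theorem I would choose $t$ so that $p^{-it}\approx 1$ for all primes $p\ne 2,3$, while $2^{-it}=\alpha$ and $3^{-it}=\beta$ range freely over the unit circle. At such $t$ and $\sigma$ slightly larger than $1$,
\[
G(s)-1\;\approx\;4^{-\sigma}\alpha^2+9^{-\sigma}\beta^2+C(\sigma),
\]
where $C(\sigma)$ is a small real correction from the frozen primes (with $C(1)=G(1)-1-\tfrac14-\tfrac19\approx 0.007$). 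The two free parameters let the main term reach magnitude $\tfrac14+\tfrac19=\tfrac{13}{36}$ at $\sigma=1$, and the critical numerical inequality
\[
\tfrac14+\tfrac19\;=\;\tfrac{13}{36}\;>\;2\sin(\pi/18)\;=\;\bigl|e^{i\pi/9}-1\bigr|
\]
--- which \emph{fails} for $k=8$, since $2\sin(\pi/16)\approx 0.390>13/36$ --- leaves just enough room to hit the target $e^{i\pi/k}-1-C(\sigma)$ for every $k\ge 9$. A Rouch\'e-type argument on a small disk around the Kronecker candidate $s_0$ then yields an exact zero with $\sigma>1$, and the density of the Kronecker approximations produces infinitely many such zeros.
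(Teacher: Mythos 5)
Your opening reduction (factor out $\zeta^k(3s)$, pass to $\log(\zeta(2s)/\zeta(3s))=i\pi(2j+1)/k$) is exactly the paper's starting point, and your non-existence half is a sound, more elementary alternative to the paper's route. The paper bounds the same quantity --- the maximal imaginary part of $\sum_p\log H_p$ over independent phases --- via the Bohr--Haviland--Kershner theory of vectorial addition of convex curves, landing just below $\pi/5$; your identity $\sup_t|\arg G(\sigma+it)|=\sum_p\sup_\phi|\arg H_p(\sigma,\phi)|$ is an exact expression for that supporting function, and the refinement you flag at $p=2$ does rescue the computation: numerically $\sup_\phi|\arg H_2(1,\phi)|\approx 0.374$ against the crude $\arcsin(1/4)+\arcsin(1/8)\approx 0.378$, a saving of roughly $0.003$--$0.004$ versus a deficit of roughly $0.002$, and for $\sigma\geq 1.01$ the crude bound already suffices, so uniformity in $\sigma$ is not an obstacle. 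This half is correct modulo the (genuinely delicate) numerical verification you defer.

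The existence half, however, has a real gap. The local factor at $2$ is $(1-8^{-\sigma}\alpha^3)(1-4^{-\sigma}\alpha^2)^{-1}=1+4^{-\sigma}\alpha^2-8^{-\sigma}\alpha^3+16^{-\sigma}\alpha^4-\cdots$, and since $\alpha^2,\alpha^3,\alpha^4$ are all slaved to the single parameter $\alpha=2^{-it}$, the terms you discard have moduli $1/8$ and $1/16$ at $\sigma=1$ (and $1/27$ at $p=3$). These are an order of magnitude larger than the margin $13/36-2\sin(\pi/18)\approx 0.014$ on which your ``critical numerical inequality'' rests, so the annulus $\{4^{-\sigma}\alpha^2+9^{-\sigma}\beta^2\}$ does not describe the reachable set at the relevant scale. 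The true reachable set from primes $2$ and $3$ alone is the product of two closed \emph{non-convex} curves --- each local curve $g(p^\sigma e^{i\theta})$, $g(z)=\log((z^3-z)/(z^3-1))$, is a double loop around the origin (Figure 1 of the paper) --- and nothing in your argument shows this product covers the target; it is not even clear that two primes suffice, since the paper's all-primes bound for the attainable height on the imaginary axis is only $0.36$, which clears $\pi/9\approx 0.349$ barely. The paper circumvents all of this by restricting each local curve to the sub-arc $|\theta|\leq\arccos(-1/(2p^\sigma))$, on which it proves (via an explicit monotonicity computation for $v'/u'$) that the image is a convex Jordan curve, and then invoking Bohr, Haviland and Kershner: the vectorial sum over all primes is the region between two convex Jordan curves, contains a neighbourhood of the origin, and has supporting function at $\pi/2$ exceeding $0.36>\pi/9$. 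Bohr's equivalence theorem (rather than a Rouch\'e argument at a single Kronecker point, though that route is also workable) then converts attainability into infinitely many genuine zeros. To repair your version you would have to carry out the two-curve product analysis honestly or fall back on the paper's convexity framework.
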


The proof of this result is quite different and it is of its own interest. Essentially it follows ideas of Bohr \cite{bohr8,bohr9}, see also Titchmarsh \cite[Chapter XI]{titchmarsh}.   

\begin{acknowledgements}
The first author was partially supported by the Grant no. 2016/23/D/ST1/01149 from the National
Science Centre. The second author has been partially supported by a CRM-ISM postdoctoral fellowship and by a fellowship ``Ing. Giorgio Schirillo'' from INdAM.
\end{acknowledgements}


\section{Proof of Theorem \ref{theorem:L1n}}

\begin{lemma*}
If $L(s)$ is a Dirichlet series satisfying \ref{hp:abs_conv} and \ref{hp:EP}, then there exists $N_0\geq 3$ such that for every $N\geq N_0$
\[
|\log(L(2s)+\ldots+L(Ns))|\leq B
\]
uniformly for $\sigma\geq 1$ for some $B=B(N)>0$.\\
If furthermore $|a(n)|\leq n^{1/4}$ for every $n\geq 2$, then we can take $N_0=3$.
\end{lemma*}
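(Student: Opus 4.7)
The plan is to isolate the $n=1$ term in each $L(ks)$ and show that what remains is small. For $k\ge 2$ and $\sigma\ge 1$ set $r_k(s):=L(ks)-1=\sum_{n\ge 2}a(n)n^{-ks}$, which converges absolutely since $\re(ks)\ge k\ge 2>1$ by hypothesis \ref{hp:abs_conv}. Then
\begin{equation*}
L(2s)+\cdots+L(Ns)=(N-1)+R_N(s),\qquad R_N(s):=\sum_{k=2}^{N}r_k(s),
\end{equation*}
so the problem reduces to showing that $|R_N(s)|$ is uniformly small compared to $N-1$. Interchanging summations and using $\sigma\ge 1$,
\begin{equation*}
\sup_{\sigma\ge 1}|R_N(s)|\le \sum_{k=2}^{\infty}\sum_{n\ge 2}\frac{|a(n)|}{n^{k}}=\sum_{n\ge 2}\frac{|a(n)|}{n(n-1)}\le 2\sum_{n\ge 2}\frac{|a(n)|}{n^{2}}=:2M,
\end{equation*}
where $M<\infty$ since \ref{hp:abs_conv} gives absolute convergence at $\sigma=2$. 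Choosing $N_0\ge 3$ with $N_0-1>2M$, for every $N\ge N_0$ the sum $L(2s)+\cdots+L(Ns)$ lies in the disc of radius $2M$ about $N-1$, hence is bounded and bounded away from $0$. Writing it as $(N-1)(1+w_N(s))$ with $|w_N(s)|\le 2M/(N-1)<1$, the principal branch $\log(L(2s)+\cdots+L(Ns))=\log(N-1)+\log(1+w_N(s))$ is well-defined and bounded on $\sigma\ge 1$ by some $B=B(N)$.

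For the refined statement we upgrade the bound on $|r_k(s)|$ using the pointwise hypothesis $|a(n)|\le n^{1/4}$: for $\sigma\ge 1$,
\begin{equation*}
|r_k(s)|\le \sum_{n\ge 2}n^{1/4-k}=\zeta(k-1/4)-1\le \int_1^{\infty}x^{1/4-k}\,dx=\frac{1}{k-5/4}.
\end{equation*}
Specializing to $N=3$,
\begin{equation*}
|R_3(s)|\le \bigl(\zeta(7/4)-1\bigr)+\bigl(\zeta(11/4)-1\bigr)\le \frac{4}{3}+\frac{4}{7}=\frac{40}{21}<2=N-1,
\end{equation*}
so the argument of the first paragraph closes with $N_0=3$.

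The main obstacle is precisely the sharpness of this last step: the crude integral comparison for $\zeta(s)-1$ is only just good enough to fit the $k=2$ and $k=3$ tails within $N-1=2$, and any noticeably larger exponent than $1/4$ in the hypothesis on $a(n)$ would fail to give the bound by this method. By contrast, the first part of the lemma is essentially automatic once the decomposition $L(ks)=1+r_k(s)$ is written down and one observes that $\sum_{n\ge 2}|a(n)|/n^{2}$ is finite; in particular hypothesis \ref{hp:EP} does not appear to be needed for the lemma itself.
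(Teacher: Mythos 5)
Your proof is correct, and its overall strategy is the same as the paper's: write $L(2s)+\cdots+L(Ns)=(N-1)+R_N(s)$, show $|R_N(s)|<N-1$ uniformly for $\sigma\ge 1$, and conclude that the values stay in a disc about $N-1$ avoiding the origin, so the principal logarithm is bounded. The refined part (using $|a(n)|\le n^{1/4}$ to get $|L(ks)-1|\le \zeta(k-1/4)-1\le 1/(k-5/4)$ by integral comparison) is essentially identical to the paper's. Where you genuinely diverge is in the first part: the paper derives a pointwise bound $|a(n)|\le C_\eta n^{1/2}$ from hypotheses \ref{hp:abs_conv} and \ref{hp:EP}, obtains $|L(ks)-1|\le C_\eta/(k-3/2)$, and then needs an averaging step since $\sum_{k\le N}C_\eta/(k-3/2)$ grows like $\log N$; you instead sum the full double series to get the $N$-independent bound $\sum_{n\ge 2}|a(n)|/(n(n-1))\le 2M$, which uses only the absolute convergence at $\sigma=2$ from \ref{hp:abs_conv}. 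This is a mild but real simplification, and your observation that \ref{hp:EP} is not needed for the lemma is correct. One small point to tidy up: the refined claim is that the conclusion holds for \emph{every} $N\ge 3$, whereas you only verify $N=3$ explicitly. This is immediate to complete --- either note that passing from $N$ to $N+1$ adds at most $1/(N+1-5/4)\le 4/7<1$ to the bound on $|R_N|$ while $N-1$ increases by $1$, or observe that for $N\ge 4$ your uniform bound already gives $2M\le 2(\zeta(7/4)-1)\le 8/3<3\le N-1$ --- but it should be said.
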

\begin{proof}
Note that by \ref{hp:abs_conv} and \ref{hp:EP} it follows that for every $\eps>0$ and every $n\geq 2$ we have 
\begin{equation*}
    |a(n)|\leq C_\eps n^{\theta+\eps}
\end{equation*}
for some $C_\eps>0$. 
Hence for $\sigma\geq 1$ and $k\geq 2$, taking $\eta=1/2-\theta$ we have that
\begin{equation}\label{eq:bound}
\begin{split}
|L(ks)-1|&\leq C_{\eta}\sum_{n\geq 2} n^{-k\sigma+1/2}\leq C_{\eta}\int_{1}^\infty u^{-k\sigma+1/2}du \leq\frac{C_{\eta}}{k-3/2}.
\end{split}
\end{equation}
Therefore we have
\begin{align*}
\limsup_{n\rightarrow\infty}\bigg|\frac{L(2s)+\ldots+L(Ns)}{(N-1)}-1\bigg|&\leq \lim_{N\rightarrow\infty}\frac{1}{N-1}\sum_{k=2}^N\frac{C_{\eta}}{k-3/2}=0.
\end{align*}
Therefore the values of $L(2s)+\ldots+L(Ns)$ are located in the disc of center $s=N-1$ and radius less than $N-1$ if $N$ is sufficiently large, say $N\geq N_0$, so we can find a suitable constant $B>0$.

If we have the additional hypothesis that $|a(n)|\leq n^{1/4}$ for all $n\geq 2$, then \eqref{eq:bound} becomes
$$|L(ks)-1|\leq \frac{1}{k-5/4},$$
and thus for $N\geq 3$
$$|L(2s)+\ldots+L(Ns)-(N-1)|\leq \sum_{k=2}^N \frac{1}{k-5/4}< N-1.$$
The thesis follows with the same arguments as before.
\end{proof}

We can now prove Theorem \ref{theorem:L1n}.
\begin{proof}[Proof of Theorem \ref{theorem:L1n}]
If $N=2$, then we define $f(s)=\log(-L(2s))$ which is bounded by
$$\pi + K\sum_p\sum_{k\geq 1}p^{k(\theta-2)}\leq \pi+K \sum_p \frac{1}{p^{3/2}-1}=B<\infty$$
uniformly for $\sigma\geq 1$. Hence we can apply Theorem \ref{theorem:sol_equation}, so
$$\log(L(s))=\log(-L(2s))$$
has infinitely many solutions for $\sigma>1$, and thus $L(s)+L(2s)$ has infinitely many zeros for $\sigma>1$.

If $N\geq N_0$, where $N_0$ is given by the above lemma, then we may take $f(s)=\log(-L(2s)-\cdots-L(Ns))$, which is bounded by $B+\pi$ uniformly for $\sigma\geq 1$ by the same lemma. Hence as above we can apply Theorem \ref{theorem:sol_equation} and we deduce that $L(s)+\cdots+L(Ns)$ has infinitely many zeros for every $N\geq N_0$.
\end{proof}


\section{Proof of Theorem \ref{theorem:sol_equation}}
Let $B$ be the constant such that the logarithm of the Dirichlet series $f(s)=\sum_n c(n)n^{-s}$ is uniformly bounded by $B$. Then, by absolute convergence and Kronecker's theorem, for any fixed sequence $\{t_p\}_p\subset \RR$ and for any $\eps>0$ there exists $t\in\RR$ such that (see e.g. Perelli and Righetti \cite[Theorem 1(iv)]{perrig})
$$\left|\sum_n \frac{c(n)}{n^{\sigma}\prod_{p^\nu\| n} p^{i\nu t_p}}-f(\sigma+it)\right|<\eps$$
for every $\sigma\geq 1$. Therefore
$$\left|\log\left(\sum_n \frac{c(n)}{n^{\sigma}\prod_{p^\nu\| n} p^{i\nu t_p}}\right)\right|<B+O_B(\eps)$$
for every $\sigma\geq 1$. Taking $\eps\rightarrow0^+$ we obtain that
\begin{equation}\label{eq:unif_bound}
    \left|\log\left(\sum_n \frac{c(n)}{n^{\sigma}\prod_{p^\nu\| n} p^{i\nu t_p}}\right)\right|\leq B
\end{equation}
for every sequence $\{t_p\}_p\subset \RR$ and every $\sigma\geq 1$.

Now, as in Saias and Weingartner \cite{saias}, we use Brouwer fixed point theorem in the following way: if for some $\sigma>1$ we can solve the equation
\begin{equation} \label{equation:continuous_system}
\sum_p a(p)p^{-\sigma-it_p(z)} = z    
\end{equation}
for every $|z|\leq (B+K_\theta+\pi)$, where
$$K_\theta = K\sum_{p}\frac{1}{p^{2(1-\theta)}-p^{1-\theta}}$$
with $\theta$ and $K$ given by \ref{hp:EP}, and $t_p(z)$ are continuous real functions in the variable $z$, then we get a solution for \eqref{equation:equation}. Indeed the function
\[
z\mapsto \log\left(\sum_n \frac{c(n)}{n^{\sigma}\prod_{p^\nu\| n} p^{i\nu t_p(z)}}\right)-\sum_p\sum_{k= 2}^\infty \frac{b(p^k)}{p^{k(\sigma+it_p(z))}}+\pi i
\]
is continuous from the disk $|z|\leq (B+K_\theta+\pi)$ into itself by \eqref{eq:unif_bound} and \ref{hp:EP} for any $\sigma\geq 1$, so it has a fixed point $w$. Now, Bohr's equivalence theorem (see e.g. \cite{righetti3}) guarantees that from the above $\sigma$ and the $t_p(w)$ we can get some $s$ with $\re(s)>1$ and such that \eqref{equation:equation} holds. 

On the other hand, similarly as in Saias and Weingartner \cite{saias}, for any $\sigma>1$ such that
\begin{equation}\label{equation:suff_cond}
\sum_p |a(p)|p^{-\sigma} \geq 10(B+K_\theta+\pi)    
\end{equation} 
we have that \eqref{equation:continuous_system} holds. Indeed, let $p_1$ and $p_2$ be such that 
\[
\frac13 - \frac{1}{10\sqrt{3}}\leq \mu_1 = \frac{\sum_{p\leq p_{1}}|a(p)|p^{-\sigma}}{\sum_{p}|a(p)|p^{-\sigma}}<\frac{1}{3} \quad\hbox{ and }\quad \frac13 - \frac{1}{10\sqrt{5}}\leq\mu_2 = \frac{\sum_{p_1<p\leq p_{2}}|a(p)|p^{-\sigma}}{\sum_{p}|a(p)|p^{-\sigma}}<\frac{1}{3}.
\]
Then we have
$$\frac13 < \mu_0=1-\mu_1-\mu_2 \leq \frac13 + \frac{1}{10\sqrt{3}}+\frac{1}{10\sqrt{5}}.$$
Therefore the map $G:(0,\pi/2)^2\rightarrow \CC$, $(\theta_1,\theta_2)\mapsto\mu_{1}e^{i\theta_1}+\mu_{2}e^{-i\theta_2}$ is a diffeomorphism onto its image and, since
$$\mu_1+\mu_2-\mu_0>\frac13 - \frac{1}{5\sqrt{3}}-\frac{1}{5\sqrt{5}}\geq \frac{1}{10}\qquad \hbox{and}\qquad \mu_0-|\mu_2-\mu_1|>\frac13-\frac{1}{10\sqrt{3}}\geq\frac{1}{10},$$
we have that (cf. Figure 1 of Saias and Weingartner)
$$\hbox{Im}(G)\supset \{w\in\CC\, :\, |w-\mu_{0}|\leq 1/10 \}.$$
By \eqref{equation:suff_cond} we may take
$$w=\mu_{0}+\frac{z}{\sum_{p}|a(p)|p^{-\sigma}}$$
for any $|z|\leq (B+K_\theta+\pi)$, i.e. we may find a continuous solution
$$t_p(z)=\left\{\begin{array}{ll}
-\theta_1(z)/\log p - \arg(a(p)) & p\leq p_1\\
\theta_2(z)/\log p - \arg(a(p)) & p_1<p\leq p_2\\
 \pi/\log p - \arg(a(p)) & p> p_2
\end{array}\right.$$
of \eqref{equation:continuous_system} for any $\sigma>1$ such that \eqref{equation:suff_cond} holds.\\
Hence the result follows since \eqref{equation:suff_cond} is satisfied by any $\sigma$ sufficiently close to $1$ by \ref{hp:PNT}.


\section{Proof of Theorem \ref{theorem:zetak23}}

First of all, note that $\zeta^k(2s)+\zeta^k(3s)=0$ if and only if $$\log\frac{\zeta(3s)}{\zeta(2s)}=\frac{\pi}{k}i.$$
Moreover 
$$\log\frac{\zeta(3s)}{\zeta(2s)}=\sum_{p}\log\left(1-p^{-2s}\right)-\sum_{p}\log\left(1-p^{-3s}\right)=\sum_p\log\left(\frac{p^{3s}-p^s}{p^{3s}-1}\right).$$
Let $g(z)=\log((z^3-z)/(z^3-1))$. Then for every $r>1$ the image of the circle $|z|=r$ by means of the holomorphic function $g(z)$ is a double loop around the origin, symmetric with respect to the real axis, but thus it is not a Jordan curve (see for example Figure \ref{figure:curve}).
\begin{figure}[hbt]
    \centering
    \includegraphics[width=13cm]{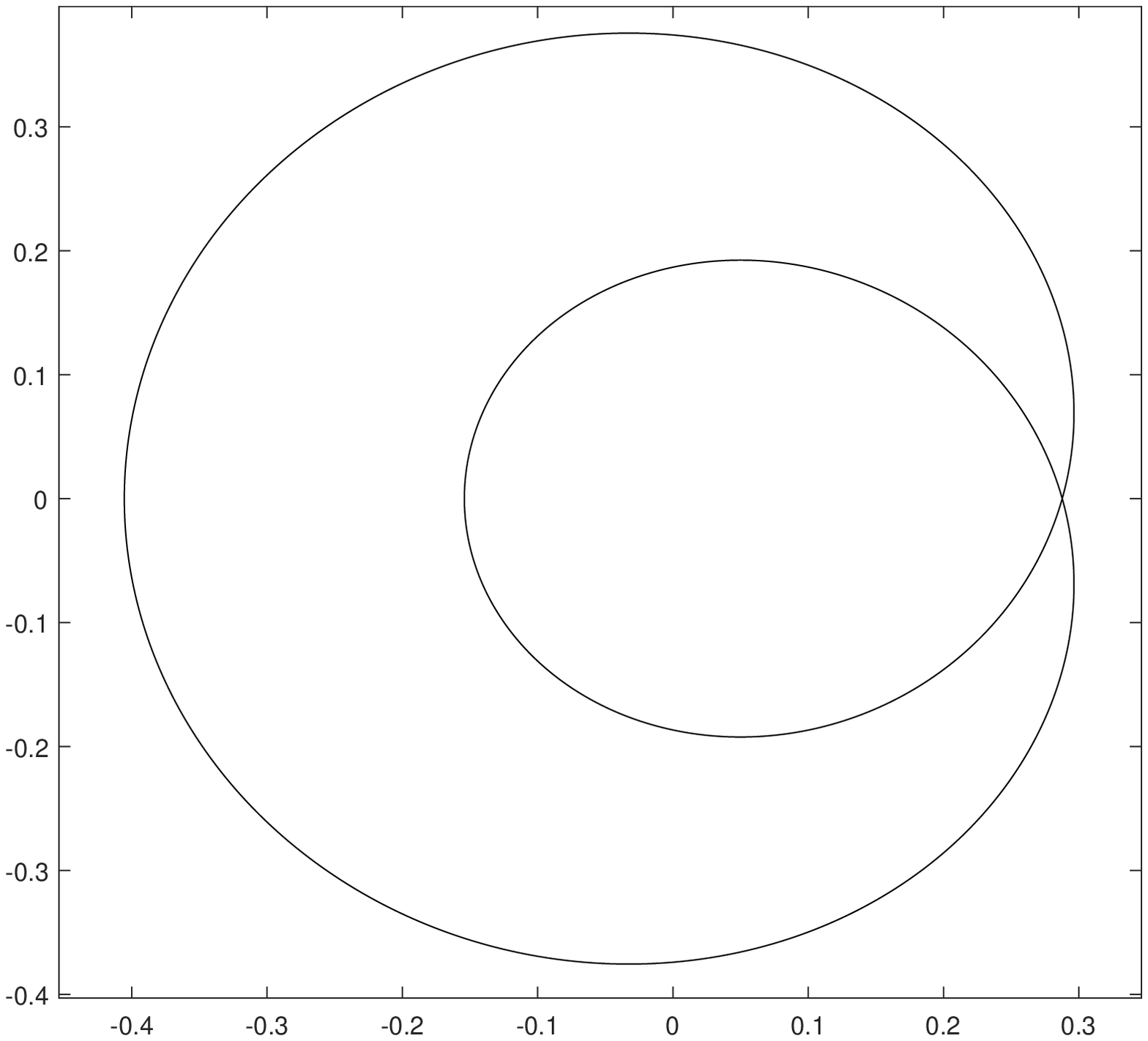}
    \caption{for $r=2$: \texttt{parametric plot (Re[log[(8 exp[3 I t]-2 exp[I t])/(8 exp[3 I t]-1)]],Im[log[(8 exp[3 I t]-2 exp[I t])/(8 exp[3 I t]-1)]])}}
    \label{figure:curve}
\end{figure}

If we write $z=re^{i\theta}$, then we have that $g(z)$ intersects the real axis four times in three distinct points:
\begin{equation}\label{equation:real_axis}
\begin{split}
&-\log\Big(\frac{r^3-1}{r^3-r}\Big)\hbox{ for }\theta=0, \quad -\log\Big(\frac{r^3+1}{r^3-r}\Big)\hbox{ for }\theta=\pi,\\
\text{ and } &\log\Big(\frac{r^2}{r^2-1}\Big)\hbox{ for }\theta=\pm 2\arctan\left(\sqrt{\frac{2r+1}{2r-1}}\right)=\pm\arccos(-1/(2r)).
\end{split}
\end{equation}
While it intersects the imaginary axis in four distinct points when
\begin{equation}\label{equation:imag_axis}
\theta=\pm 2 \arctan\!\left(\sqrt{\frac{6r^2-1}{2r^2-2r+1}\pm2r\sqrt{\frac{8r^2-3}{(2r^2-2r+1)^2}}}\right)=\pm \arccos\!\left(\frac{-1\pm \sqrt{8r^2-3}}{4r}\right)
\end{equation}
Therefore for every $r\geq 2$ we have that the image of the arc $z=re^{i\theta}$, with $|\theta|\leq \arccos(-1/(2r))$, is actually a convex Jordan curve. Indeed if we write $g(z)=u(\theta)+iv(\theta) $, then
\begin{equation*}
\begin{split}
u'(\theta)+iv'(\theta)&=i g'(z)z=i\frac{z^3-1}{z^3-z}\frac{(3z^2-1)(z^3-1)-3z^2(z^3-z)}{(z^3-1)^2}z\\
&=i\frac{\splitfrac{2r^4(\cos(2\theta)-i\sin(2\theta))+r^3(4\cos\theta-4i\sin\theta+\cos(3\theta)-i\sin(3\theta))}{+2r^2(2+\cos(2\theta)-i\sin(2\theta))+4r\cos\theta+1}}{|z^2+z+1|^2|z+1|^2}
\end{split}    
\end{equation*}
So we have
$$\frac{v'(\theta)}{u'(\theta)}=-\frac{2r^4\cos(2\theta)+r^3(\cos(3\theta)+4\cos\theta)+2r^2(2+\cos(2\theta))+4r\cos\theta+1}{2r^4\sin(2\theta)+r^3(\sin(3\theta)+4\sin\theta)+2r^2\sin(2\theta)},$$
which is strictly decreasing in $(-\arccos(-1/(2r)),\arccos(-1/(2r)))$, since its derivative is
$$-\frac{2r(8 + 31r^2 + 17r^4)\cos\theta + 4(1 + 7r^2 + 8r^4)\cos(2\theta) + r((7 + 16r^2)\cos(3\theta) + r(24 + 35r^2 + 8r^4 + 4\cos(4\theta)))}{r^2\sin^2\theta(4(r^2 + 1)\cos\theta + r(2\cos(2\theta) + 5))^2},$$
and the denominator is non-negative and the numerator is
\begin{align*}
\leq \begin{cases}
-(8r^6-14r^4-16r^3-39r^2-7r-12)<-382&\text{if } \cos\theta\geq 0,\ r\geq 2\\
-(8r^6-14r^2-16r^3-33r^2-7r-12)<-2&\text{if } -\frac{1}{2r}\leq\cos\theta<0 ,\ r\geq 2,
\end{cases}
\end{align*}
where we used the fact that in the interval we have $\cos\theta\geq -\frac{1}{2r}$ and for the latter inequality that $\cos(4\theta)>1/2$ when $-\frac{1}{2r}\leq\cos\theta<0$, $r\geq 2$.

Therefore we can apply the work of Bohr \cite{bohr1}, Haviland \cite{haviland} and Kershner \cite{kershner1} on the addition of convex Jordan curves to a subset of the set $A=\{\log(\zeta(2\sigma+2it)/\zeta(3\sigma+3it)):\sigma> 1,t\in\RR\}.$ In fact if we denote for every prime $p$ and any $\sigma\geq 1$ with $\mathcal{C}_{p,\sigma}$ the image via $g(z)$ of the arc $z=p^{\sigma}e^{i\theta}$, with $|\theta|\leq \arccos(-1/(2p^\sigma))$, then by Bohr's equivalence theorem the vectorial addition $\mathcal{R}_\sigma=\sum_p \mathcal{C}_{p,\sigma}$ is contained in $A$ for every $\sigma>1$. By Bohr \cite[\S3]{bohr1} we know that $\mathcal{R}_\sigma$ is either the region of plane inside a convex Jordan curve or the region in between two convex Jordan curves. We can actually say more: let $h_{p,\sigma}(\theta)$ denote the supporting function of the curve $\mathcal{C}_{p,\sigma}$, i.e.
$$h_{p,\sigma}(\theta)=\max_{z\in\mathcal{C}_{p,\sigma}}\re(e^{-i\theta}z).$$
Then by Haviland \cite[Theorem IIB]{haviland} the supporting function of the outer curve is $h_{O,\sigma}(\theta)=\sum_p h_{p,\sigma}(\theta)$, while by Kershner  \cite[Theorems II and III]{kershner1} the supporting function of the inner curve (if any) satisfies $h_{I,\sigma}(\theta)\leq h_{2,\sigma}(\theta)-\sum_{p\geq 3} h_{p,\sigma}(\theta+\pi)$. Hence by \eqref{equation:real_axis} we get in particular that $A$ contains any point $x$ on the real axis such that
$$-\sum_{p\geq 2} \log\Big(\frac{p^{3\sigma}-1}{p^{3\sigma}-p^\sigma}\Big)=h_{O,\sigma}(\pi)e^{i\pi}\leq x\leq  -\log\Big(\frac{2^{3\sigma}-1}{2^{3\sigma}-2^{\sigma}}\Big)+\sum_{p\geq 3} \log\Big(\frac{p^{2\sigma}}{p^{2\sigma}-1}\Big)\leq h_{I,\sigma}(\pi)e^{i\pi},$$
for some $\sigma>1$. Since for $\sigma$ sufficiently close to $1$ the LHS is negative and the RHS is positive, then $A$ contains a neighborhood of the origin. Hence, since the outer and the possibly inner curve are convex, then $A$ must contain any point $yi$ on the imaginary axis such that
$$|y|\leq h_{O,\sigma}(\pi/2) = \sum_{p\geq 2} g(p^\sigma e^{i\theta_{p,\sigma}}),\quad\hbox{where } \theta_{p,\sigma}=\arccos\Big(\frac{-1+\sqrt{8p^{2\sigma}-3}}{4p^\sigma}\Big)$$
by \eqref{equation:imag_axis} for some $\sigma>1$. Since for $\sigma$ sufficiently close to 1 the RHS is greater than $0.36$, we conclude that $\frac{\pi}{k}i\in A$ if $k\geq 9$.

On the other hand, by considering for example the convex hulls of the image via $g(z)$ of the circle $|z|= p$, and taking into account the above considerations on the outer curve of the vectorial addition, it is clear that there cannot be points $yi$ in $A$ such that $y$ is real and
$$|y|> \sum_{p\geq 2} g(pe^{i\theta_p'})> 0.61966,\quad\hbox{where } \theta_p'=-\arccos\Big(\frac{-1-\sqrt{8p^{2\sigma}-3}}{4p^\sigma}\Big).$$
Hence $\zeta^k(2s)+\zeta^k(3s)$ has no zeros for $\sigma>1$ if $k\leq 5$.

\bibliographystyle{amsplain}
\bibliography{biblio}

\end{document}